\newtheorem{theorem}{Theorem}[section]
\newtheorem{lemma}[theorem]{Lemma}
\newtheorem{prop}[theorem]{Proposition}
\theoremstyle{definition}
\theoremstyle{remark}
\newtheorem{remark}[theorem]{Remark}
\numberwithin{equation}{section}
\newcommand{\rd}{{\mathbb R^d}}
\newcommand{\rr}{{\mathbb R}}
\def\Im {{\rm Im}\,}
\def\E{{\mathbb E}}
\begin{document}

\title{\bf Transient anomalous sub-diffusion on bounded domains}

\author{Mark M. Meerschaert}
\address{Mark M. Meerschaert, Department of Statistics and Probability,
Michigan State University, East Lansing, MI 48823.}
\email{mcubed@stt.msu.edu}
\urladdr{http://www.stt.msu.edu/$\sim$mcubed/}
\thanks{MMM was partially supported by NSF grants DMS-0125486, DMS-0803360, EAR-0823965 and NIH grant R01-EB012079-01.}

\author{Erkan Nane}
\address{Erkan Nane, Department of Mathematics and Statistics, 221 Parker Hall, Auburn University, Auburn, AL 36849.}
\email{nane@auburn.edu}
\urladdr{http://www.auburn.edu/$\sim$ezn0001/}

\author{P. Vellaisamy}
\address{P. Vellaisamy, Department of Mathematics,
 Indian Institute of Technology Bombay,
 Powai, Mumbai 400076 India.}
\email{pv@math.iit.ac.in}
\thanks{This paper was completed while PV was visiting Michigan State University. }

\begin{abstract}
This paper develops strong solutions and stochastic solutions for the tempered fractional diffusion equation on bounded domains.  First the eigenvalue problem for tempered fractional derivatives is solved.  Then a separation of variables, and eigenfunction expansions in time and space, are used to  write strong solutions.  Finally, stochastic solutions are written in terms of an inverse subordinator.
\end{abstract}

\keywords{Fractional diffusion, Cauchy problem, tempered stable, boundary value problem.}

\maketitle


\section{Introduction}

Transient anomalous sub-diffusion equations replace the first time derivative by a tempered fractional derivative of order $0<\beta<1$ to model delays between movements \cite{temperedLM, Cartea2007}.  These governing equations have proven useful in finance \cite{BNS, CGMY} and geophysics \cite{TemperedStable} to model phenomena that eventually transition to Gaussian behavior.   A stochastic model for transient anomalous sub-diffusion replaces the time variable in a diffusion by an independent inverse tempered stable subordinator.  This time-changed process is useful for particle tracking, a superior numerical method in the presence of irregular boundaries \cite{Zhang-JSP, Zhang-PRE2008}.  The idea of tempering was introduced by Mantegna and Stanley \cite{MantegnaStanley1994,MantegnaStanley1995} and developed further by Rosinski \cite{Rosinski2007}.

Section \ref{sec2} provides some background on diffusion and fractional calculus, to establish notation, and to make the paper relatively self-contained.  Section \ref{sec3} uses Laplace transforms and complex analysis to prove strong solutions to the eigenvalue problem for the tempered fractional derivative operator.  Then Section \ref{sec4} solves the tempered fractional diffusion equation on bounded domains.   Separation of variables and eigenvalue expansions in space and time lead to explicit strong solutions in series form.  Stochastic solutions are then developed, using an inverse tempered stable time change in the underlying diffusion process.

\section{Traditional and fractional diffusion}\label{sec2}
Suppose that $D$ is a bounded domain in $\rd$.  A uniformly elliptic operator in divergence form is defined for $u\in C^2(D)$ by
\begin{equation}\label{unif-elliptic-op}
L_Du=\sum_{i,j=1}^{d} \frac{\partial }{\partial x_j} \left(a_{ij}(x)\frac{\partial
u}{\partial x_i}\right)
\end{equation}
with $a_{ij}(x)=a_{ji}(x)$ such that for some $\lambda>0 $ we have
\begin{equation}\label{elliptic-bounds}
\lambda \sum_{i=1}^ny_i^2\leq \sum_{i,j=1}^na_{ij}(x)y_iy_j\leq
\lambda^{-1} \sum_{i=1}^ny_i^2,\quad\text{for all $ y \in \rd$.}
\end{equation}
We also assume that for some $\Lambda>0$ we have
\begin{equation}\label{aijbound}
\sum_{i,j=1}^n |a_{ij}(x)| \leq \Lambda\quad\text{for all $x \in D$.}
\end{equation}
Take $a=\sigma \sigma ^T$, and $B(t)$ a Brownian motion.  Let $X(t)$ solve the stochastic differential equation $dX(t)=b(X(t))dt+\sigma (X(t))dB(t)$, and define the first exit time  $\tau_D(X)=\inf \{ t\geq 0:\ {X(t)}\notin D\}$.  An application of the It$\mathrm{\hat{ o}}$ formula shows that the semigroup
 \begin{equation}\label{eqn2.3n}
T_{D}(t)f(x)={\E_x[f(X(t))I(\tau_D(X)>t)]}
\end{equation}
has generator \eqref{unif-elliptic-op}, see Bass \cite[Chapters 1 and 5]{bass}.
Since $T_D(t)$ is intrinsically ultracontractive (see
 \cite[Corollary 3.2.8, Theorems 2.1.4, 2.3.6, 4.2.4  and Note 4.6.10]{davies} and  \cite[Theorems 8.37 and 8.38]{gilbarg-trudinger}),
there exist eigenvalues
$0< \eta_1<\eta_2\leq \eta_3\cdots$, with $\eta_n\to\infty$ and a
complete orthonormal basis of eigenfunctions
$\psi_n$ in $L^2(D)$ satisfying
\begin{equation}\label{eigen-eigen}
 L_D \psi_n(x)=-\eta_n \psi_n(x), \ x\in D:\  \\
\psi_n |_{\partial D}=0.
\end{equation}
Then $p_D(t,x,y)=\sum_{n=1}^{\infty}e^{-\eta_n t}\psi_n(x)\psi_n(y)$
is the heat kernel of the killed semigroup $T_D$. This series
converges absolutely and  uniformly on $[t_0,\infty)\times D\times
D$ for all $t_0>0$.

Denote the Laplace transform (LT) $ t\rightarrow s $ of $u(t,x)$
by
\begin{equation*}\begin{split}
\tilde{u}(s,x)&= \mathcal{L}_t [u(t, x)]=\int_{0}^{\infty}e^{-s
t}u(t,x)dt .
\end{split}\end{equation*}
The $\psi_n$-transform is defined by
$\bar{u}(t,n)=\int_{D}\psi_n(x)u(t,x)dx$
and the $\psi_n$-Laplace transform is defined by
 \begin{equation}\begin{split} \label{eqn2.5v3}
\hat{u}(s,n)
&= \int_{D} \psi_{n}(x) \tilde{u}(s, x) dx .
 \end{split}\end{equation}
Since $\{\psi_n\}$ is a complete orthonormal basis for $L^2(D)$,
we can invert the $\psi_n$-transform to obtain
$u(t,x)=\sum_n \bar{u}(t,n) \psi_n(x)$
for any $t>0$, where the series converges
in the $L^2$ sense \cite[Proposition 10.8.27]{Royden}.

Suppose that $D$ satisfies a uniform exterior cone condition, so that
then each $x\in \partial D$ is regular for $D^\complement$ \cite[Proposition 1, p.\ 89]{bass2}.
If $f$ is continuous on $\bar D$, then
\begin{equation}\begin{split}\label{TDdef}
u(t,x)&=T_D(t)f(x)={\E_x[f(X(t))I(\tau_D(X)>t)]}\\
&=\int_D p_D(t,x,y)f(y)dy=\sum_{n=1}^{\infty}e^{-\eta_n t}\psi_n(x)\bar{f}(n)
\end{split}\end{equation}
solves the Dirichlet initial-boundary value problem \cite[Theorem
2.1.4]{davies}:
\begin{equation}\begin{split}\label{CP}
\frac{\partial u(t,x)}{\partial t}&= L_D u(t,x),\ \ x\in D, \ t>0, \\
u(t,x)&=0,\ \ x\in \partial D,\\
u(0,x)&=f(x), \ \ x\in D.
\end{split}\end{equation}
This shows that the diffusion $X(t)$ killed at the boundary $\partial D$ is the stochastic solution to the diffusion equation \eqref{CP} on the bounded domain $D$.

The Riemann-Liouville
fractional derivative \cite{MillerRoss, Samko} is defined by
\begin{equation}\label{RLDef}
{\frac{\partial^\beta}{\partial t^\beta}}\,
g(t)=\frac{1}{\Gamma(1-\beta)}\frac d{dt}\int_0^t
\frac{g(s)\,ds}{(t-s)^\beta}
\end{equation}
when $0<\beta<1$.  The Caputo fractional derivative \cite{Caputo} is defined by
\begin{equation}\label{CaputoDef}
\left(\frac{\partial}{\partial t}\right)^\beta g(t)=\frac{1}{\Gamma(1-\beta)}\int_0^t  \frac{g'(s)\,ds}{(t-s)^\beta}
\end{equation}
when $0<\beta<1$.  It is easy to check using ${\mathcal{L}}[t^{-\beta}]=s^{\beta-1}\Gamma(1-\beta)$
that
\begin{equation}\label{rld1}
 {\mathcal{L}}_t \left[\frac{d^{\beta}}{dt^{\beta}} g(t)\right] = s^{\beta}
 \tilde g(s),
\end{equation}
while the Caputo fractional derivative \eqref{CaputoDef} has Laplace
 transform $s^\beta\tilde g(s)-s^{\beta-1}g(0)$.  It follows that
\begin{equation}\label{RLtoCaputo}
{\frac{\partial^\beta}{\partial t^\beta}}\, g(t)=
\left(\frac{\partial}{\partial t}\right)^\beta g(t) +
\frac{g(0)t^{-\beta}}{\Gamma(1-\beta)}.
\end{equation}
Substituting a Caputo fractional derivative of order $0<\beta<1$ for the first order time derivative in \eqref{CP} yields a fractional Cauchy problem. This fractional diffusion equation was solved in \cite[Theorem 3.6]{m-n-v-aop}.  Those solutions exhibit anomalous sub-diffusion at all times, with a plume spreading rate that is significantly slower than \eqref{TDdef}.  Many practical problems exhibit transient sub-diffusion, converging to \eqref{TDdef} at late time \cite{BNS, CGMY, TemperedStable}.  Hence, the goal of this paper is to extend the results of \cite{m-n-v-aop} to transient sub-diffusions.

\section{Eigenvalues for tempered fractional derivatives}\label{sec3}

Suppose $D(x)$ is a standard stable subordinator with L\'evy measure $\phi(y,\infty)=I(y>0)y^{-\beta}/\Gamma(1-\beta)$  for $0<\beta<1$ , so that ${\mathbb E}[e^{-sD(x)}]=e^{-x \psi(s)}$, where the Laplace symbol
$\psi(s)=s^\beta=\int_0^\infty \left(1-e^{-sy}\right)\phi_\beta(dy) .$
If $f_x(t)$ is the density of $D(x)$, then $q_{\lambda}(t,x)=
f_x(t)e^{-\lambda t}/e^{-x \lambda^\beta}$ is a density on $x>0$ with Laplace transform (LT)
\begin{equation}\label{temperedLT}
\tilde q_{\lambda}(s,x)=\int_0^\infty q_{\lambda}(t,x)\,dt=e^{x \lambda^\beta}\int_0^\infty e^{-(s+\lambda)t} f_x(t)\,dt=
  e^{-x\psi_{\lambda}(s)}
\end{equation}
where $\psi_{\lambda}(s) =(s+\lambda)^\beta-\lambda^\beta$.  Rosinski \cite{Rosinski2007} notes that the tempered stable subordinator $D_{\lambda}(x)$ with this Laplace symbol has L\'evy measure $\phi_\lambda(dy)=e^{-\lambda y}\phi_\beta(dy)$.

Define the Riemann-Liouville tempered fractional derivative of order $0<\beta<1$ by
\begin{equation}\label{RLtfd}
{\frac{\partial^{\beta,\lambda}}{\partial t^{\beta,\lambda}}}\,g(t)=e^{-\lambda t} \frac{1}{\Gamma(1-\beta)}\frac d{dt}\int_0^t
\frac{e^{\lambda s}g(s)\,ds}{(t-s)^\beta}-\lambda^\beta g(t)
\end{equation}
as in \cite{temperedLM}.  We say that a function is a mild solution to a
pseudo-differential equation if its Laplace transform solves the
corresponding equation in transform space.

\begin{prop}
The density $q_{\lambda}(t,x) $ of the tempered stable subordinator $D_{\lambda}(x)$ is a mild solution to
{\begin{equation}\label{gov}
\frac \partial{\partial x}\, q_{\lambda}(t,x) = - {\frac{\partial^{\beta,\lambda}}{\partial t^{\beta,\lambda}}}\,  q_{\lambda}(t,x) .
\end{equation}}
\end{prop}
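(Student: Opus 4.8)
The plan is to verify the defining property of a mild solution: that the Laplace transform $t\to s$ of $q_{\lambda}(t,x)$ satisfies the image of \eqref{gov} in transform space. By \eqref{temperedLT} we already have $\tilde q_{\lambda}(s,x)=e^{-x\psi_{\lambda}(s)}$ with $\psi_{\lambda}(s)=(s+\lambda)^\beta-\lambda^\beta$, so the left-hand side of \eqref{gov} transforms to $\frac{\partial}{\partial x}e^{-x\psi_{\lambda}(s)}=-\psi_{\lambda}(s)e^{-x\psi_{\lambda}(s)}$. The substantive step is to establish the tempered analogue of \eqref{rld1}, namely
\[
\mathcal{L}_t\!\left[\frac{\partial^{\beta,\lambda}}{\partial t^{\beta,\lambda}}\,g(t)\right]=\left[(s+\lambda)^\beta-\lambda^\beta\right]\tilde g(s)=\psi_{\lambda}(s)\,\tilde g(s).
\]
Applying this with $g(t)=q_{\lambda}(t,x)$ gives $-\psi_{\lambda}(s)e^{-x\psi_{\lambda}(s)}$ for the transform of the right-hand side of \eqref{gov}, matching the left-hand side, which finishes the proof.

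To get the transform identity I would unwind the definition \eqref{RLtfd} one operation at a time. Put $h(t)=e^{\lambda t}g(t)$, so the convolution integral in \eqref{RLtfd} equals $\Gamma(1-\beta)\,(h*k)(t)$ with $k(t)=t^{-\beta}/\Gamma(1-\beta)$ and $\mathcal{L}_t[k]=s^{\beta-1}$ (using $\mathcal{L}[t^{-\beta}]=s^{\beta-1}\Gamma(1-\beta)$ from the excerpt); hence $\mathcal{L}_t[(h*k)]=s^{\beta-1}\tilde h(s)$ by the convolution theorem. Differentiating in $t$ multiplies the transform by $s$ and subtracts the value of $(h*k)$ at $t=0^+$, which vanishes, giving $s^\beta\tilde h(s)$. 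The outer factor $e^{-\lambda t}$ then shifts the transform variable $s\mapsto s+\lambda$, producing $(s+\lambda)^\beta\,\tilde h(s+\lambda)$; and since $\tilde h(s+\lambda)=\int_0^\infty e^{-(s+\lambda)t}e^{\lambda t}g(t)\,dt=\tilde g(s)$, the first term of \eqref{RLtfd} transforms to $(s+\lambda)^\beta\tilde g(s)$. Subtracting the transform $\lambda^\beta\tilde g(s)$ of the second term yields $\psi_{\lambda}(s)\tilde g(s)$, as claimed.

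The only genuine obstacle is the analytic bookkeeping behind these manipulations: convergence of the transforms for $\operatorname{Re}s$ large, validity of the convolution theorem and the $e^{-\lambda t}$ shift rule, and, above all, that the boundary term $\lim_{t\to0^+}\int_0^t e^{\lambda s}g(s)(t-s)^{-\beta}\,ds=0$ in the derivative-of-a-convolution step. For $g(t)=q_{\lambda}(t,x)$ this is harmless: $q_{\lambda}(t,x)=e^{-\lambda t}e^{x\lambda^\beta}f_x(t)$ is a constant multiple of the stable density $f_x$, which is bounded, decays faster than any power of $t$ as $t\to0^+$, and is integrable at infinity, so the convolution is continuous with value $0$ at $t=0$ and $\tilde h(s+\lambda)=e^{x\lambda^\beta}\int_0^\infty e^{-st}f_x(t)\,dt$ converges for all $s>0$. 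I would record these integrability properties of $f_x$ (or cite them) and otherwise treat the transform computations as routine, so the write-up should be brief once the transform identity for \eqref{RLtfd} is in place.
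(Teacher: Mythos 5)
Your proposal is correct and follows essentially the same route as the paper: both reduce the claim to showing that the Laplace transform of the tempered Riemann--Liouville derivative \eqref{RLtfd} is $\psi_\lambda(s)\tilde g(s)=[(s+\lambda)^\beta-\lambda^\beta]\tilde g(s)$, via the exponential shift rule, and then observe that $\tilde q_\lambda(s,x)=e^{-x\psi_\lambda(s)}$ satisfies the transformed equation. The only difference is that you re-derive the transform formula \eqref{rld1} from the convolution theorem (and add integrability remarks that the mild-solution framework does not strictly require), whereas the paper simply cites \eqref{rld1} applied to $e^{\lambda t}g(t)$.
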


\begin{proof}
Clearly  $\tilde q_{\lambda}(s,x)=e^{-x\psi_{\lambda}(s)}$ solves
\begin{equation}\label{govLT}
\frac {\partial }{\partial x}\, \tilde q_{\lambda}(s,x) =
-\psi_{\lambda}(s)\, \tilde q_{\lambda}(s,x)
\end{equation}
with initial condition $\tilde q_{\lambda}(s,0)=1$.  The right-hand side of \eqref{govLT} involves a pseudo-differential operator $\psi_{\lambda}(\partial_t)$ with Laplace symbol $\psi_{\lambda}(s)$, see Jacob \cite{Jacob}.  To complete the proof, it suffices to show that $\psi_{\lambda}(s)\tilde g(s)$ is the LT of \eqref{RLtfd}.    Since ${\mathcal{L}}[e^{\lambda t} g(t)] = \tilde g(s-\lambda)$,
we get
\begin{equation}\label{rld2}
 {\mathcal{L}} \left[\frac{d^{\beta}}{dt^{\beta}} \left(e^{\lambda t}g(t)\right)\right] = s^{\beta}
 \tilde g(s-\lambda),
\end{equation}
which leads to
\begin{equation}\label{rld3}
 {\mathcal{L}} \left[e^{-\lambda t} \frac{d^{\beta}}{dt^{\beta}} \big(e^{\lambda t}g(t)\big)\right] = (s+\lambda)^{\beta}
 \tilde g(s).
\end{equation}
Then
\eqref{gov} follows easily.  This also shows that $\psi_{\lambda}(\partial_t)$ is the negative generator of the $C_0$ semigroup associated with the tempered stable process.
\end{proof}

Define the inverse tempered stable subordinator
\begin{equation}\label{ElambdaDef}
E_{\lambda}(t) = \inf \{ x>0  : D_{\lambda} (x) >t\}.
\end{equation}
A general
result on hitting times \cite[Theorem 3.1]{M-S-triangular} shows
that, for all $t>0$, the random variable $E_{\lambda}(t)$ has
Lebesgue density
\begin{equation}\label{Edensity}
g_{\lambda}(t,x)=\int_0^t\phi_\lambda(t-y,\infty)q_{\lambda}(y,
x)\,dy
\end{equation}
and $(t,x)\mapsto {g_{\lambda}(t,x)}$ is
measurable.  Following \cite[Remark 4.8]{M-S-triangular}, we define the Caputo tempered fractional derivative of order $0<\beta<1$ by
\begin{equation}\label{tcd-1}
\left(\frac {\partial}{\partial t}\right)^{\beta,\lambda} g(t)
={\frac{\partial^{\beta,\lambda}}{\partial t^{\beta,\lambda}}}\,g(t)
-\frac{g(0)}{\Gamma(1-\beta)} \int_t^\infty
e^{-\lambda r} \beta r^{-\beta-1}\,dr .
\end{equation}

\begin{prop}
The density \eqref{Edensity} of the inverse tempered stable subordinator \eqref{ElambdaDef} is a mild solution to
\begin{equation}\label{gCpC}
\frac\partial{\partial x}\, g_\lambda(t,x)=-\left(\frac {\partial}{\partial t}\right)^{\beta,\lambda} g_\lambda(t,x) .
\end{equation}
\end{prop}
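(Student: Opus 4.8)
The plan is to verify the mild-solution property directly from the definition, by Laplace transforming \eqref{gCpC} in the time variable $t\mapsto s$ and checking that $\tilde g_\lambda(s,x)$ satisfies the resulting first-order ordinary differential equation in $x$. The structural observation that makes this easy is that \eqref{Edensity} exhibits $g_\lambda(\cdot,x)$ as a convolution in $t$, namely $g_\lambda(\cdot,x)=\phi_\lambda(\cdot,\infty)*q_\lambda(\cdot,x)$, so that $\tilde g_\lambda(s,x)=\big(\mathcal{L}_t[\phi_\lambda(t,\infty)]\big)\,\tilde q_\lambda(s,x)$; the second factor is already known to equal $e^{-x\psi_\lambda(s)}$ by \eqref{temperedLT}.

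For the first factor, I would write the tail as $\phi_\lambda(t,\infty)=\int_t^\infty\phi_\lambda(dr)$, apply Tonelli's theorem, and use the fact (exactly as for the stable subordinator) that the driftless Laplace exponent satisfies $\psi_\lambda(s)=\int_0^\infty(1-e^{-sr})\phi_\lambda(dr)$:
\begin{equation*}
\mathcal{L}_t[\phi_\lambda(t,\infty)]=\int_0^\infty e^{-st}\Big(\int_t^\infty\phi_\lambda(dr)\Big)dt=\int_0^\infty\frac{1-e^{-sr}}{s}\,\phi_\lambda(dr)=\frac{\psi_\lambda(s)}{s}.
\end{equation*}
Thus $\tilde g_\lambda(s,x)=s^{-1}\psi_\lambda(s)\,e^{-x\psi_\lambda(s)}$, and differentiating this explicit formula in $x$ gives the clean identity
\begin{equation*}
\partial_x\tilde g_\lambda(s,x)=-\psi_\lambda(s)\,\tilde g_\lambda(s,x).
\end{equation*}

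It remains to recognize the right side of this identity as the Laplace transform of the right side of \eqref{gCpC}. Since $\partial_x$ commutes with $\mathcal{L}_t$ here, \eqref{gCpC} becomes $\partial_x\tilde g_\lambda(s,x)=-\mathcal{L}_t\big[(\partial/\partial t)^{\beta,\lambda}g_\lambda(\cdot,x)\big]$. From the proof of the previous proposition, \eqref{rld3} together with \eqref{RLtfd} gives $\mathcal{L}_t\!\left[\tfrac{\partial^{\beta,\lambda}}{\partial t^{\beta,\lambda}}g\right]=\psi_\lambda(s)\,\tilde g(s)$, while the correction term in the Caputo tempered derivative \eqref{tcd-1} is precisely $g(0)\,\phi_\lambda(t,\infty)$ — the integral appearing there being exactly the tail $\phi_\lambda(t,\infty)$ of the tempered L\'evy measure — and hence, by the computation above, has transform $g(0)\psi_\lambda(s)/s$. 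Therefore
\begin{equation*}
\mathcal{L}_t\!\left[\Big(\tfrac{\partial}{\partial t}\Big)^{\beta,\lambda}g(t)\right]=\psi_\lambda(s)\,\tilde g(s)-g(0)\,\frac{\psi_\lambda(s)}{s}.
\end{equation*}
Applying this with $g(t)=g_\lambda(t,x)$ and noting that $g_\lambda(0^+,x)=0$ for each fixed $x>0$ (clear from \eqref{Edensity}, and consistent with $E_\lambda(0)=0$ almost surely), the correction term drops out, so the transformed right side of \eqref{gCpC} equals $-\psi_\lambda(s)\,\tilde g_\lambda(s,x)$, which coincides with $\partial_x\tilde g_\lambda(s,x)$. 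Hence $\tilde g_\lambda$ solves the equation in transform space, i.e.\ $g_\lambda$ is a mild solution to \eqref{gCpC}.

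The one place that deserves care — and which I regard as the substance of the argument rather than a routine step — is the bookkeeping of the initial term: one must identify the integral in \eqref{tcd-1} as the L\'evy tail $\phi_\lambda(t,\infty)$, so that it carries the same factor $\psi_\lambda(s)/s$ that already stands in front of $e^{-x\psi_\lambda(s)}$ in $\tilde g_\lambda$, and one must confirm $g_\lambda(0^+,x)=0$, so that this term cancels and \eqref{gCpC} collapses in transform space to the exponential ODE solved above. The convolution identity, the Tonelli interchange, and the $x$-differentiation of the explicit transform are all routine once $\mathcal{L}_t[\phi_\lambda(\cdot,\infty)]=\psi_\lambda(s)/s$ is in hand.
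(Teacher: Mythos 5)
Your proof is correct, but it takes a genuinely different route from the paper's. The paper does not verify the transformed equation by hand: it cites Theorem 4.1 of \cite{M-S-triangular}, which says that \eqref{Edensity} is a mild solution of the Riemann--Liouville form \eqref{gCp} with the forcing term $\delta(x)\phi_\lambda(t,\infty)$, and then converts \eqref{gCp} into \eqref{gCpC} by identifying the integral in \eqref{tcd-1} with the tail \eqref{LMtail} and using the initial value $g_\lambda(0,x)=\delta(x)$, so the forcing term is exactly absorbed by the Caputo correction. You instead work pointwise in $x>0$: you compute $\tilde g_\lambda(s,x)=s^{-1}\psi_\lambda(s)e^{-x\psi_\lambda(s)}$ from the convolution structure of \eqref{Edensity} together with \eqref{temperedLT} (your Tonelli computation of $\mathcal{L}_t[\phi_\lambda(t,\infty)]=s^{-1}\psi_\lambda(s)$ is precisely the identity \eqref{LTlevytail} that the paper quotes from \cite{M-S-triangular}, and your formula for $\tilde g_\lambda$ is \eqref{laplace-distributed-inverse}, which the paper only derives later, in the proof of Lemma 3.3), differentiate in $x$, and match against the symbol $\psi_\lambda(s)\tilde g(s)-g(0)s^{-1}\psi_\lambda(s)$ of the Caputo tempered derivative obtained from \eqref{rld3}, \eqref{RLtfd} and \eqref{tcd-1}, with the correction killed by $g_\lambda(0^+,x)=0$ for fixed $x>0$. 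What your approach buys is self-containedness (no appeal to the governing-equation theorem of \cite{M-S-triangular}, only to the density formula \eqref{Edensity}) and an explicit transform that is reused anyway in Lemma 3.3; what the paper's approach buys is brevity and a formulation valid distributionally in $x$ including $x=0$, where the initial condition in $x$ appears as the $\delta(x)$ term rather than being sidestepped. The two treatments of the initial term are consistent ($\delta(x)$ vanishes for $x>0$), and your justification of $g_\lambda(0^+,x)=0$, while stated briefly, is sound since $q_\lambda(\cdot,x)$ is bounded near $0$ and the kernel $\phi_\lambda(t-y,\infty)$ is integrable, giving $g_\lambda(t,x)=O(t^{1-\beta})$.
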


\begin{proof}
Theorem 4.1 in \cite{M-S-triangular} shows that \eqref{Edensity} is a mild solution to the pseudo-differential equation
\begin{equation}\label{gCp}
\frac\partial{\partial x}\, g_\lambda(t,x)=-\psi_{\lambda}(\partial_t)g_\lambda(t,x)+\delta(x)\phi_\lambda(t,\infty)
\end{equation}
where $\psi_{\lambda}(\partial_t)$ is the pseudo-differential
operator with Laplace symbol $\psi_{\lambda}(s)$, i.e.,
the Riemann-Liouville tempered fractional derivative \eqref{RLtfd}. Use
\begin{equation}\label{LMtail}
\phi_\lambda(t,\infty)=\frac{1}{\Gamma(1-\beta)} \int_t^\infty
e^{-\lambda r} \beta r^{-\beta-1}\,dr
\end{equation}
to rewrite \eqref{gCp} in the form
\begin{equation}\label{gCpRL}
\frac\partial{\partial x}\, g_\lambda(t,x)=-{\frac{\partial^{\beta,\lambda}}{\partial t^{\beta,\lambda}}}\, g_\lambda(t,x)+\frac{\delta(x)}{\Gamma(1-\beta)} \int_t^\infty
e^{-\lambda r} \beta r^{-\beta-1}\,dr ,
\end{equation}
and then apply \eqref{tcd-1} with $g_\lambda(0,x)=\delta(x)$ to get \eqref{gCpC}.
\end{proof}

The next two results establish eigenvalues for Caputo tempered fractional derivatives, which will then be used in Section \ref{sec4} to solve tempered fractional diffusion equations by an eigenvalue expansion.


\begin{lemma}\label{eigenvalue-problem}
For any $\mu>0$, the Laplace transform
\begin{equation}\label{gLTxtomu}
\check g_\lambda (t, \mu)={\mathcal{L}_x [g_\lambda (t,x)]}= \int_0^\infty e^{-\mu
x}g_\lambda(t,x)\,dx =\E [e^{-\mu
E_{\lambda}(t)}]
 \end{equation}
is a mild solution to the eigenvalue problem
{\begin{equation}\label{laplace-density-pde}
\left(  \frac {\partial}{\partial t}
\right)^{\beta,\lambda}\check{g_\lambda}(t, \mu)=-\mu
\check{g_\lambda}(t, \mu)
\end{equation}}
with $\check{g_\lambda}(0, \mu)=1$, for the Caputo tempered fractional derivative \eqref{tcd-1}.
\end{lemma}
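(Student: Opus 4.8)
The plan is to verify everything in Laplace-transform space in the time variable, which is all that the mild-solution formulation requires. The initial condition is immediate from the probabilistic representation: $D_\lambda(0)=0$ forces $E_\lambda(0)=0$ a.s., so $\check g_\lambda(0,\mu)=\E[e^{-\mu E_\lambda(0)}]=1$.

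Next I would compute the double transform $\widehat{\check g}_\lambda(s,\mu):=\mathcal L_t[\check g_\lambda(t,\mu)](s)=\int_0^\infty e^{-\mu x}\tilde g_\lambda(s,x)\,dx$, the interchange of the $x$- and $t$-integrals being legitimate since $g_\lambda\ge 0$. By the convolution structure \eqref{Edensity}, $g_\lambda(\cdot,x)=\phi_\lambda(\cdot,\infty)\ast q_\lambda(\cdot,x)$, so $\tilde g_\lambda(s,x)=\mathcal L_t[\phi_\lambda(\cdot,\infty)](s)\,\tilde q_\lambda(s,x)$; combining $\tilde q_\lambda(s,x)=e^{-x\psi_\lambda(s)}$ from \eqref{temperedLT} with the Lévy–Khintchine relation $\psi_\lambda(s)=\int_0^\infty(1-e^{-sy})\phi_\lambda(dy)$, a short Fubini argument (using $(1-e^{-sy})/s=\int_0^y e^{-su}\,du$) gives $\mathcal L_t[\phi_\lambda(\cdot,\infty)](s)=\psi_\lambda(s)/s$, hence $\tilde g_\lambda(s,x)=\frac{\psi_\lambda(s)}{s}e^{-x\psi_\lambda(s)}$ and
\[
\widehat{\check g}_\lambda(s,\mu)=\frac{\psi_\lambda(s)}{s\,(\mu+\psi_\lambda(s))}.
\]
Equivalently, $\tilde g_\lambda(s,x)=\frac{\psi_\lambda(s)}{s}e^{-x\psi_\lambda(s)}$ can be read off the $t$-transformed form of \eqref{gCpRL}, namely $\partial_x\tilde g_\lambda(s,x)=-\psi_\lambda(s)\tilde g_\lambda(s,x)$ for $x>0$ together with the boundary value $\tilde g_\lambda(s,0^+)=\psi_\lambda(s)/s$.

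Then I would identify the $t$-transform of the Caputo tempered derivative: from \eqref{rld3} the Riemann–Liouville tempered derivative \eqref{RLtfd} has Laplace symbol $(s+\lambda)^\beta-\lambda^\beta=\psi_\lambda(s)$, and the tail correction in \eqref{tcd-1} has transform $\mathcal L_t\big[\int_t^\infty e^{-\lambda r}\beta r^{-\beta-1}\,dr\big](s)=\Gamma(1-\beta)\psi_\lambda(s)/s$ by \eqref{LMtail} and the same Lévy-measure computation, so
\[
\mathcal L_t\Big[\big(\tfrac{\partial}{\partial t}\big)^{\beta,\lambda}g(t)\Big](s)=\psi_\lambda(s)\Big(\tilde g(s)-\frac{g(0)}{s}\Big).
\]
Applying this to $g(t)=\check g_\lambda(t,\mu)$ with $g(0)=1$, the eigenvalue equation \eqref{laplace-density-pde} in transform space becomes $\psi_\lambda(s)\big(\widehat{\check g}_\lambda(s,\mu)-1/s\big)=-\mu\,\widehat{\check g}_\lambda(s,\mu)$, i.e.\ $(\mu+\psi_\lambda(s))\widehat{\check g}_\lambda(s,\mu)=\psi_\lambda(s)/s$, which is exactly the formula computed above; this proves the claim. (One can also obtain the same identity directly by applying $\mathcal L_x$ to \eqref{gCpC}, interchanging it with the time operator, and inserting the boundary value $\tilde g_\lambda(s,0^+)=\psi_\lambda(s)/s$.)

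The computational steps — the two interchanges of integration and the elementary $x$-integral — are routine. The point that needs care is the bookkeeping of the inhomogeneous terms: one must check that the $\delta(x)$-source in \eqref{gCp}/\eqref{gCpRL} and the tail term subtracted off in the Caputo derivative \eqref{tcd-1} cancel against each other, so that \eqref{laplace-density-pde} really is homogeneous with the clean initial value $\check g_\lambda(0,\mu)=1$. This is precisely what makes $\check g_\lambda(\cdot,\mu)$ an honest eigenfunction rather than a solution of a forced equation, and it is the place where the two preceding propositions are used in an essential way.
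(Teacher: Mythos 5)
Your proposal is correct and follows essentially the same route as the paper: compute the double Laplace transform $G_\lambda(s,\mu)=\psi_\lambda(s)/\bigl(s(\mu+\psi_\lambda(s))\bigr)$ from the convolution structure \eqref{Edensity} and $\tilde q_\lambda(s,x)=e^{-x\psi_\lambda(s)}$, identify $\mathcal L_t\bigl[(\partial/\partial t)^{\beta,\lambda}g\bigr]=\psi_\lambda(s)\tilde g(s)-g(0)\psi_\lambda(s)/s$ via \eqref{rld3}, \eqref{LMtail} and \eqref{LTlevytail}, and verify the eigenvalue identity in transform space. The only differences are cosmetic: you derive $\mathcal L_t[\phi_\lambda(\cdot,\infty)]=\psi_\lambda(s)/s$ directly instead of citing it, and you make the probabilistic check of the initial condition explicit.
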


\begin{proof}
Equation (3.12) in \cite{M-S-triangular} shows that
\begin{equation}\label{LTlevytail}
\mathcal{L}_t[\phi_\lambda (t, \infty)]=s^{-1}\psi_\lambda(s) .
\end{equation}
Then \eqref{Edensity} together with the LT convolution property shows that
\begin{equation}\label{laplace-distributed-inverse}
\tilde g_\lambda(s,x)=
\mathcal{L}_t[g_\lambda(t,x)] = \mathcal{L}_t[\phi_\lambda (t, \infty)] \mathcal{L}_t
[q_\lambda(t,x)]=\frac{1}{s}\psi_\lambda(s)e^{-x\psi_\lambda(s)}
\end{equation}
for any $x>0$, and then a Fubini argument shows that the double Laplace transform
\begin{equation}\label{double-laplace}
\begin{split}
G_\lambda(s,\mu)= \mathcal {L}_t\mathcal {L}_x[g_\lambda(t,x)]
&= \frac{\psi_\lambda(s)}{s} \int_0^\infty e^{-(\mu + \psi_\lambda(s))x}dx = \frac{\psi_\lambda(s)}{s(\mu +\psi_\lambda(s))} .
\end{split}
\end{equation}
Rearrange \eqref{double-laplace} to get
\begin{equation}\label{eqn3.16n}
-\mu G_\lambda(s,\mu)=\psi_\lambda(s)G_\lambda(s,\mu)- s^{-1}\psi_\lambda(s) ,
\end{equation}
use \eqref{LTlevytail} along with \eqref{tcd-1} and \eqref{LMtail} to see that
\begin{equation}\label{tcd-2}
\mathcal{L}_t \left[\left(\frac {\partial}{\partial t}
\right)^{\beta,\lambda} \check{g_\lambda}(s, \mu)\right]
= \psi_\lambda(s) G_\lambda(s, \mu)- s^{-1}\psi_\lambda(s) ,
 \end{equation}
then substitute into \eqref{eqn3.16n} to get
\[
\mathcal{L}_t \left[\left(  \frac {\partial}{\partial t}
\right)^{\beta,\lambda}\check{g_\lambda}(t, \mu)\right]
=\mathcal{L}_t \left[-\mu
\check{g_\lambda}(t, \mu)\right] .
\]
This proves that \eqref{gLTxtomu} is the mild solution to \eqref{laplace-density-pde}.
\end{proof}

The next theorem is the main technical result of this paper.  It shows that Laplace transforms \eqref{gLTxtomu} of inverse tempered stable densities
are the eigenvalues of the Caputo tempered fractional derivative, in the strong sense.

\begin{theorem}\label{classical--tempered-caputo}
{For $0<\beta<1$, let }
\begin{equation}\label{kdef}
k(t)=\frac{ e^{-t\lambda}}{\pi \sin
(\beta\pi )}t^{\beta-1}\Gamma(1-\beta) .
\end{equation}
For any $\mu,\lambda>0$, $\mu \neq \lambda^\beta $, the function $\check{g_\lambda}(t,
\mu)$ in \eqref{gLTxtomu}
can be written in the form
\begin{equation}\label{inverse-laplace}
\check{g_\lambda}(t,\mu)=\frac{\mu}{\pi}\int_0^\infty
(r+\lambda)^{-1}e^{-t(r+\lambda)}\Phi(r,1)dr,
\end{equation}
where
$$\Phi(r,1)=\frac{r^\beta \sin (\beta\pi )}{r^{2\beta}\sin^2 (\beta \pi)+(\mu-\lambda^\beta+r^\beta
\cos (\beta\pi))^2}$$
and
\begin{equation}\label{time-derivative-bound}
|\partial_t \check{g_\lambda}(t,\mu)|\leq \mu k(t).
\end{equation}
Then $\check{g_\lambda}(t,\mu)$ is a strong (classical) solution
of the eigenvalue problem \eqref{laplace-density-pde}.
\end{theorem}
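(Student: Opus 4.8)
The plan is to read $\check g_\lambda(\cdot,\mu)$ off the double Laplace transform produced in Lemma~\ref{eigenvalue-problem}: by \eqref{double-laplace}, $\check g_\lambda(\cdot,\mu)$ is the inverse $t$-Laplace transform of $G_\lambda(s,\mu)=\frac{\psi_\lambda(s)}{s(\mu+\psi_\lambda(s))}$ with $\psi_\lambda(s)=(s+\lambda)^\beta-\lambda^\beta$ on the principal branch of $z\mapsto z^\beta$. So I would start from the Bromwich integral $\check g_\lambda(t,\mu)=\frac1{2\pi i}\int_{c-i\infty}^{c+i\infty}e^{st}G_\lambda(s,\mu)\,ds$ ($c>0$) and deform the contour onto a keyhole hugging the branch cut $(-\infty,-\lambda]$ inherited from $(s+\lambda)^\beta$. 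Off that cut, $G_\lambda(\cdot,\mu)$ is meromorphic with at most one simple pole, at the root of $\mu+\psi_\lambda(s)=0$; that root lies in $(-\lambda,0)$ when $0<\mu<\lambda^\beta$ and is absent when $\mu>\lambda^\beta$, and the hypothesis $\mu\ne\lambda^\beta$ is precisely what keeps it away from the branch point $s=-\lambda$ (so $\mu+\psi_\lambda$ stays bounded away from $0$ there and the integrand is locally integrable near $-\lambda$). The large circular arcs drop out because $G_\lambda(s,\mu)=O(1/|s|)$ as $|s|\to\infty$ while $\Re s\to-\infty$ forces $e^{st}\to0$ for $t>0$, so the inversion reduces to the jump of $G_\lambda$ across the cut (plus the residue at the pole in the regime $0<\mu<\lambda^\beta$).

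The key computation is that jump. On the two edges of the cut put $s+\lambda=re^{\pm i\pi}$, so $(s+\lambda)^\beta=r^\beta e^{\pm i\beta\pi}$ and $\mu+\psi_\lambda(s)=(\mu-\lambda^\beta+r^\beta\cos\beta\pi)\pm i\,r^\beta\sin\beta\pi$; a short simplification gives $G_\lambda|_{\mathrm{upper}}-G_\lambda|_{\mathrm{lower}}=\mp\frac{2i\mu}{r+\lambda}\Phi(r,1)$, where $\Phi(r,1)$ is exactly the quotient in the statement — its denominator is $|\mu+\psi_\lambda(s)|^2$ on the cut and its numerator is $\Im (s+\lambda)^\beta=r^\beta\sin\beta\pi$. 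Substituting $s=-(r+\lambda)$ along each edge and adding the two contributions yields $\frac{\mu}{\pi}\int_0^\infty(r+\lambda)^{-1}e^{-t(r+\lambda)}\Phi(r,1)\,dr$; since $\Phi\ge0$ and $\check g_\lambda(t,\mu)=\E[e^{-\mu E_\lambda(t)}]\ge0$, the overall sign is pinned down, and \eqref{inverse-laplace} follows.

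From \eqref{inverse-laplace} the rest is bookkeeping. The integrand being dominated uniformly for $t\ge t_0>0$, differentiate under the integral: $\partial_t\check g_\lambda(t,\mu)=-\frac{\mu}{\pi}\int_0^\infty e^{-t(r+\lambda)}\Phi(r,1)\,dr$. Dropping the nonnegative term $(\mu-\lambda^\beta+r^\beta\cos\beta\pi)^2$ from the denominator of $\Phi$ gives $\Phi(r,1)\le\frac1{r^\beta\sin\beta\pi}$, hence
\[
|\partial_t\check g_\lambda(t,\mu)|\le\frac{\mu}{\pi\sin\beta\pi}\int_0^\infty r^{-\beta}e^{-t(r+\lambda)}\,dr=\frac{\mu e^{-\lambda t}}{\pi\sin\beta\pi}\Gamma(1-\beta)t^{\beta-1}=\mu k(t),
\]
using $\int_0^\infty r^{-\beta}e^{-tr}\,dr=\Gamma(1-\beta)t^{\beta-1}$ and \eqref{kdef}; this is \eqref{time-derivative-bound}. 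Finally, \eqref{inverse-laplace} shows $\check g_\lambda(\cdot,\mu)$ is continuous on $[0,\infty)$ with $\check g_\lambda(0,\mu)=1$ and $C^1$ on $(0,\infty)$, and \eqref{time-derivative-bound} together with $t^{\beta-1}\in L^1_{\mathrm{loc}}[0,\infty)$ (as $0<\beta<1$) makes $t\mapsto\int_0^t(t-s)^{-\beta}e^{\lambda s}\check g_\lambda(s,\mu)\,ds$ genuinely differentiable, so every operation in the Riemann--Liouville tempered derivative \eqref{RLtfd}, and hence in the Caputo tempered derivative \eqref{tcd-1}, is a bona fide pointwise derivative of $\check g_\lambda(\cdot,\mu)$. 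Since Lemma~\ref{eigenvalue-problem} already gives that $\check g_\lambda(\cdot,\mu)$ is a mild solution of \eqref{laplace-density-pde}, both sides of \eqref{laplace-density-pde} are now continuous functions with identical Laplace transforms, so they coincide and $\check g_\lambda(\cdot,\mu)$ is a strong (classical) solution.

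The main obstacle is the contour analysis in the first two steps — justifying the deformation, making the arc estimates precise both near $\infty$ and near the branch point $s=-\lambda$, and, above all, simplifying the boundary jump down to exactly $\Phi(r,1)$ with the correct sign; once \eqref{inverse-laplace} is in hand, the derivative bound and the mild-to-strong upgrade are routine.
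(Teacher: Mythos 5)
Your route is the paper's route: invert the Laplace transform \eqref{double-laplace} by deforming the Bromwich contour onto the branch cut coming from $(s+\lambda)^\beta$, read $\Phi(r,1)$ off the jump across the cut, obtain \eqref{time-derivative-bound} by differentiating under the integral and using $\Phi(r,1)\le 1/(r^\beta\sin\beta\pi)$, and then upgrade the mild solution from Lemma~\ref{eigenvalue-problem} to a classical one by showing both sides of \eqref{laplace-density-pde} are continuous and invoking uniqueness of Laplace transforms. (The paper deforms along rays of argument $\pm\omega\pi$ and lets $\omega\to 1$ rather than using a keyhole directly, but that difference is cosmetic, and your last two paragraphs coincide with the paper's derivative bound and mild-to-strong step.)

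The genuine gap is your handling of the pole. You correctly note that for $0<\mu<\lambda^\beta$ the factor $\mu+\psi_\lambda(s)$ vanishes at $s_0=(\lambda^\beta-\mu)^{1/\beta}-\lambda\in(-\lambda,0)$, a simple pole of $G_\lambda(\cdot,\mu)$ lying off the cut, and you write that the inversion equals the cut integral ``plus the residue at the pole in the regime $0<\mu<\lambda^\beta$'' --- but you then conclude \eqref{inverse-laplace}, which contains no residue term. These two statements are incompatible, and the discrepancy is not harmless: the residue equals $-\mu/\bigl(s_0\,\beta(s_0+\lambda)^{\beta-1}\bigr)>0$, and $e^{s_0t}$ with $|s_0|<\lambda$ decays strictly slower than the $e^{-t(r+\lambda)}\le e^{-\lambda t}$ factor in the cut integral, so the pole contribution cannot be absorbed into \eqref{inverse-laplace}; conversely, adding a term $C e^{s_0 t}$ would be inconsistent with the bound \eqref{time-derivative-bound} for large $t$. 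The hypothesis $\mu\neq\lambda^\beta$ only keeps the pole away from the branch point; it does not remove it. So as written your argument establishes \eqref{inverse-laplace}, and hence the theorem, only in the regime $\mu>\lambda^\beta$; for $0<\mu<\lambda^\beta$ you must either carry the residue through (and then explain how the stated formula and bound are to be read in that regime) or prove it is absent, which the computation does not allow. Be aware that the paper's own proof passes through the same delicate point without comment: with rays at angle $\omega\pi<\pi$ the pole is not enclosed, but the limit $\omega\to 1$ sweeps the rays across $s_0$ and produces a residue-type contribution that the printed argument does not address. Your write-up makes this difficulty explicit and then discards it; resolving it is exactly what is needed to complete (or correct) the proof.
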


\begin{proof}
The proof extends Theorem 2.3 in Kochubei \cite{koch3} using some
probabilistic arguments.  Since  $E_ {\lambda}(t)$ has
continuous sample paths, a dominated convergence argument shows
that $\check{g_\lambda}(t, \mu)=\E [e^{-\mu E_{\lambda}(t)}]$ is a
continuous function of $t>0$. Use \eqref{double-laplace} to write
\begin{equation}\label{Laplace-laplace-density}
G_\lambda(s,\mu)= \mathcal
{L}_t[\check{g_\lambda}(t,\mu)]=\frac{\psi_\lambda(s)}{s(\mu
+\psi_\lambda(s))}=\frac{[(s+\lambda)^\beta
-\lambda^\beta]}{s[(s+\lambda)^\beta-\lambda^\beta+\mu]}
\end{equation}
and note that $G_\lambda(s,\mu)$ is analytic off the branch cut
$\arg(s)=\pi, |s|\geq 0$.   The Laplace inversion formula
\cite[p.\ 25]{ditkin} shows that for suitable $\gamma>0$ and for
almost all $t>0$,
\begin{equation}\label{inverse-derivative-laplace}
\check{g_\lambda}(t,\mu)=\frac{d}{dt}\frac{1}{2\pi
i}\int_{\gamma-i\infty}^{\gamma +i\infty}\frac{e^{st}}{s}
\frac{[(s+\lambda)^\beta-\lambda^\beta]/s}{[(s+\lambda)^\beta-\lambda^\beta+\mu]}ds.
\end{equation}
Let $\frac12<\omega<1$ and consider the closed curve
$C_{\gamma,\omega}$ in $\mathbb{C}$, formed by a circle of radius
$R_n$ with a counterclockwise orientation, cut off on the right
side by the line $\Im(z)=\gamma$, and by the curve $S_{\gamma,
\omega}$ on the left side, consisting of the arc
$$
T_{\gamma,\omega}=\{s\in \mathbb{C}: \ |s|=\gamma, \ \ |\arg s|\leq\omega \pi\}
$$
and the two rays
\begin{equation*}
\begin{split}
\Gamma^+_{\gamma,\omega}&=\{s\in \mathbb{C}: \ |s|\geq \gamma, \ \ |\arg s|=\omega \pi\} ,\\[3pt]
\Gamma^-_{\gamma,\omega}&=\{s\in \mathbb{C}: \ |s|\geq \gamma, \ \ |\arg s|=-\omega \pi\}.
\end{split}
\end{equation*}
By Cauchy's Theorem, the integral
\[\int_{C_{\gamma,\omega}}\frac{e^{st}}{s}\frac{[(s+\lambda)^\beta-\lambda^\beta]/s}{[(s+\lambda)^\beta-\lambda^\beta+\mu]}ds=0\]
and then Jordan's Lemma \cite[p.\ 27]{ditkin} implies that we can let $R_n\to\infty$ to get
\begin{equation}\label{inverse-contour-laplace}
\check{g_\lambda}(t,\mu)=\frac{d}{dt}\frac{1}{2\pi
i}\int_{S_{\gamma,\omega}}\frac{e^{st}}{s}\frac{[(s+\lambda)^\beta-
\lambda^\beta]/s}{[(s+\lambda)^\beta-\lambda^\beta+\mu]}ds.
\end{equation}
%
%
%
%
%
%
Now pass the derivative inside the integral to get
\begin{equation}\label{inverse-contour-laplace-1}
\check{g_\lambda}(t,\mu)=\frac{1}{2\pi
i}\int_{S_{\gamma,\omega}}e^{st}\frac{[(s+\lambda)^\beta-\lambda^\beta]/s}
{[(s+\lambda)^\beta-\lambda^\beta+\mu]}ds
\end{equation}
which also implies the smoothness of the function
$t\to\check{g_\lambda}(t,\mu)$.  It is not hard to check that the integral over $T_{\gamma, \omega}$ tends to zero as $\gamma\to 0$.  Compute the remaining path integral, and let $\omega\to 1$ to obtain \eqref{inverse-laplace}.

Differentiate \eqref{inverse-laplace} with respect to $t$ and use $r^{\beta} \sin
(\beta \pi) \Phi(r, 1) \leq 1 $ to write
\begin{equation} \label{tdb1}
\begin{split}
|\partial_t
\check{g_\lambda}(t,\mu)|&=\left|\frac{\mu}{\pi}\int_0^\infty
(r+\lambda)^{-1}[\partial_te^{-t(r+\lambda)}]\Phi(r,1)dr   \right|\\
&\leq \frac{\mu}{\pi \sin (\beta\pi )}\int_0^\infty e^{-t(r+\lambda)}r^{-\beta}dr\\
&=\frac{\mu e^{-t\lambda}}{\pi \sin (\beta\pi )}t^{\beta-1}\Gamma(1-\beta)= \mu k(t),
\end{split}
\end{equation}
so that \eqref{time-derivative-bound} holds.
 Note that $|\check{g_\lambda}(t, \mu)|\leq 1$,  and write
\begin{equation*}
\begin{split}
 \bigg|\left(\frac {\partial }{\partial t}\right)^{\beta}\left(e^{\lambda t}\check{g_\lambda}(t,\mu)\right)\bigg|
&=\bigg| \frac{1}{\Gamma(1-\beta)}\int_0^t \bigg( \lambda e^{\lambda s} \check{g_\lambda}(s,\mu)+e^{\lambda s}\frac{\partial[ \check{g_\lambda}(s,\mu)]}{\partial s}\bigg)\frac{ds}{(t-s)^\beta} \bigg|\\
&\leq\frac{1}{\Gamma(1-\beta)}\int_0^t \bigg( \lambda e^{\lambda s} |\check{g_\lambda}(s,\mu)|+e^{\lambda s}\bigg|\frac{\partial[\check{g_\lambda}(s,\mu)]}{\partial s}\bigg|\bigg)\frac{ds}{(t-s)^\beta} \\
&=\frac{1}{\Gamma(1-\beta)} \int_0^t \bigg( \lambda
e^{\lambda s} +\frac{\mu }{\pi \sin (\beta\pi
)}s^{\beta-1}\Gamma(1-\beta) \bigg)\frac{ds}{(t-s)^\beta}  .
\end{split}
\end{equation*}
Then a simple dominated convergence argument shows that the Riemann-Liouville fractional derivative of
$e^{\lambda t}\check{g_\lambda}(t,\mu)$ is a continuous function of $t>0$.
Now it follows from \eqref{RLtfd} and \eqref{tcd-1} that the Caputo tempered fractional derivative of $\check{g_\lambda}(t,\mu)$ is continuous in $t>0$. Since both sides of
\eqref{laplace-density-pde} are continuous in $t>0$, it follows from Lemma \ref{eigenvalue-problem} and the
uniqueness theorem for the Laplace transform that \eqref{laplace-density-pde} holds pointwise in $t>0$ for all $\mu>0$.
\end{proof}

\section{Tempered fractional diffusion}\label{sec4}

Let $D_\infty=(0,\infty )\times D$, define $\mathcal{H}_{L_D}(D_\infty)= \{u:D_\infty\to \rr :\ \ L_D
u(t,x)\in C(D_\infty)\}$, and let $\mathcal{H}_{L_D}^{b}(D_\infty)= \mathcal{H}_{L_D}(D_\infty) \cap
\{u: |\partial_t u(t,x)|\leq k(t)g(x),\ \  g\in L^\infty(D), \ t>0\}$,
where $k(t)$ is defined in \eqref{kdef}.

\begin{theorem}\label{FC-PDE-Lx}
Let $D$ be a bounded domain with $\partial D \in  C^{1,\alpha}$ for some $0<\alpha<1$, and let $X(t)$ be a continuous Markov process with generator \eqref{unif-elliptic-op}, where $a_{ij}\in C^{\alpha}(\bar D)$.
Then, for any $f\in D(L_D)\cap C^1(\bar D)\cap C^2( D) $  such that the eigenfunction expansion of $L_{{D}}f$ with respect to the complete orthonormal basis $\{\psi_n\}$ converges uniformly and absolutely,
the (classical) solution  of
\begin{eqnarray}
\left(\frac{\partial} {\partial t}\right)^{\beta,\lambda}u(t,x)
&=&
L_D u(t,x),  \  \ x\in D, \ t\geq 0\label{frac-derivative-bounded-dd};\\[-5pt]
u(t,x)&=&0, \ x\in \partial D,\ t\geq 0; \nonumber\\
u(0,x)& =& f(x), \ x\in D,\nonumber
\end{eqnarray}
for  $u  \in
\mathcal{H}_{L_D}^{b}(D_\infty)\cap C_b(\bar D_\infty) \cap C^1(\bar D)$,
 is given by
\begin{eqnarray}
u(t,x)&=&{\E_{x}}[{f(X(E_{\lambda}{(t)}))I( \tau_D(X)> E_{\lambda}(t))}]={\E_{x}}
[{f(X(E_{\lambda}{(t)}))I( \tau_D(X(E_{\lambda}))> t)}]\nonumber\\
&=& \int_{0}^{\infty}T_{D}(l)f(x)g_\lambda(t,l)dl= \sum_0^\infty
\bar{f}(n)\psi_n(x)\check{g_\lambda}(t,
\eta_n),\label{stoch-rep-L}
\end{eqnarray}
where $E_{\lambda}(t)$ is defined by \eqref{ElambdaDef}, independent of $X(t)$, $\check{g_\lambda}(t,{\eta})=\E (e^{-\eta
E_{\lambda}(t)})$ is its Laplace transform, and $T_D(t)$ is the killed semigroup \eqref{eqn2.3n}.
\end{theorem}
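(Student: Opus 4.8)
The plan is to verify \eqref{stoch-rep-L} by the standard subordination/separation-of-variables scheme, then to check that the resulting series is a genuine classical solution using the a priori bound \eqref{time-derivative-bound} from Theorem \ref{classical--tempered-caputo}. First I would fix the probabilistic representation: since $E_\lambda(t)$ has Lebesgue density $g_\lambda(t,l)$ and is independent of $X$, conditioning on $E_\lambda(t)=l$ gives
\[
\E_x\big[f(X(E_\lambda(t)))I(\tau_D(X)>E_\lambda(t))\big]=\int_0^\infty \E_x\big[f(X(l))I(\tau_D(X)>l)\big]\,g_\lambda(t,l)\,dl=\int_0^\infty T_D(l)f(x)\,g_\lambda(t,l)\,dl,
\]
which is the integral form in \eqref{stoch-rep-L}; the two probabilistic expressions agree because $\{\tau_D(X)>E_\lambda(t)\}=\{\tau_D(X(E_\lambda))>t\}$ by continuity of paths and monotonicity of $E_\lambda$. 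Next, insert the eigenfunction expansion $T_D(l)f(x)=\sum_n e^{-\eta_n l}\bar f(n)\psi_n(x)$ from \eqref{TDdef}; since this series converges absolutely and uniformly on $[l_0,\infty)\times D$ and $g_\lambda(t,\cdot)$ is integrable, Fubini lets me interchange sum and integral to get $u(t,x)=\sum_n \bar f(n)\psi_n(x)\int_0^\infty e^{-\eta_n l}g_\lambda(t,l)\,dl=\sum_n \bar f(n)\psi_n(x)\check g_\lambda(t,\eta_n)$, using \eqref{gLTxtomu}. This establishes the displayed identities; I should be slightly careful near $l=0$, but the hypothesis $f\in D(L_D)$ together with the uniform-absolute convergence of the expansion of $L_D f$ controls the tail sufficiently.

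It then remains to show this series solves \eqref{frac-derivative-bounded-dd} in the classical sense. For each $n$, Lemma \ref{eigenvalue-problem} and Theorem \ref{classical--tempered-caputo} give that $\check g_\lambda(t,\eta_n)$ is a strong solution of $(\partial/\partial t)^{\beta,\lambda}\check g_\lambda(t,\eta_n)=-\eta_n\check g_\lambda(t,\eta_n)$ with $\check g_\lambda(0,\eta_n)=1$. Applying $L_D$ termwise (justified since $L_D\psi_n=-\eta_n\psi_n$ and $\sum_n\eta_n|\bar f(n)|\,\|\psi_n\|_\infty<\infty$ by the hypothesis on $L_D f$) yields $L_D u(t,x)=-\sum_n\eta_n\bar f(n)\psi_n(x)\check g_\lambda(t,\eta_n)$. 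To match this with $(\partial/\partial t)^{\beta,\lambda}u(t,x)$ I need to pass the Caputo tempered fractional derivative through the sum. The key estimate is \eqref{time-derivative-bound}: $|\partial_t\check g_\lambda(t,\eta_n)|\le \eta_n k(t)$, so $|\partial_t u(t,x)|\le k(t)\sum_n\eta_n|\bar f(n)|\,|\psi_n(x)|=:k(t)g(x)$ with $g\in L^\infty(D)$, which also shows $u\in\mathcal H^b_{L_D}(D_\infty)$. Since $e^{\lambda t}\partial_t(e^{-\lambda t}\cdot)$-type combinations inherit this bound, the integrand in the Riemann–Liouville tempered derivative \eqref{RLtfd} applied to the partial sums is dominated by $C(s)(t-s)^{-\beta}$ uniformly, so dominated convergence lets me differentiate and fractionally differentiate term by term; combining with the eigenvalue relations gives $(\partial/\partial t)^{\beta,\lambda}u=L_D u$. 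The boundary condition $u(t,x)=0$ on $\partial D$ follows from $\psi_n|_{\partial D}=0$ and uniform convergence up to the boundary (using $\partial D\in C^{1,\alpha}$, $a_{ij}\in C^\alpha$, so $\psi_n\in C^1(\bar D)$), and $u(0,x)=f(x)$ from $\check g_\lambda(0,\eta_n)=1$ and completeness of $\{\psi_n\}$.

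The main obstacle is the termwise justification of the fractional time differentiation: unlike the classical heat equation where $e^{-\eta_n t}$ decays fast, here $\check g_\lambda(t,\eta_n)$ decays only polynomially and the operator is nonlocal in $t$, so one must combine the uniform bound $|\partial_t\check g_\lambda(t,\eta_n)|\le\eta_n k(t)$ with the $t^{-\beta}$ integrable singularity in \eqref{RLtfd} to produce an integrable dominating function independent of $n$ after summing against $\eta_n|\bar f(n)|\,|\psi_n(x)|$. A secondary technical point is verifying regularity of $u$ up to $t=0$ and $x\in\partial D$ (the space $C_b(\bar D_\infty)\cap C^1(\bar D)$), which rests on the elliptic regularity cited for $\psi_n$ and the intrinsic ultracontractivity of $T_D$; these are routine given the hypotheses but need to be stated. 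Uniqueness within the stated solution class follows, as in \cite{m-n-v-aop}, by taking $\psi_n$-transforms to reduce to the scalar eigenvalue problem \eqref{laplace-density-pde}, whose solution is unique by the Laplace-transform uniqueness argument already used in the proof of Theorem \ref{classical--tempered-caputo}.
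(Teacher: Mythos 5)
Your proposal is correct and follows essentially the same route as the paper: the series $\sum_n \bar f(n)\psi_n(x)\check g_\lambda(t,\eta_n)$ built from the eigenpairs of $L_D$ and Lemma \ref{eigenvalue-problem}, the bound \eqref{time-derivative-bound} to justify termwise (tempered Caputo) time differentiation via dominated convergence/Fubini, elliptic regularity for behavior up to $\partial D$, the identity $\check g_\lambda(t,\eta_n)=\int_0^\infty e^{-\eta_n l}g_\lambda(t,l)\,dl$ linking the series to the killed semigroup and the time-changed process, and uniqueness via the transform argument. The only difference is order of presentation (you start from the stochastic representation and derive the series, while the paper separates variables first and then inverts the $\psi_n$-Laplace transform to obtain the stochastic form), which is not a substantive departure.
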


\begin{proof}
The proof uses a separation of variables.  Suppose $u(t,x)=G(t)F(x)$ is a solution of
(\ref{frac-derivative-bounded-dd}),
substitute into (\ref{frac-derivative-bounded-dd}) to get
$$
F(x)\left(\frac d{dt}\right)^{\beta,\lambda} G(t)
= G(t)L_D F(x)
$$
and divide both sides by $G(t)F(x)$ to obtain
$$
\frac{\left(\frac d{dt}\right)^{\beta,\lambda} G(t)}{G(t)} = \frac{L_D F(x)}{F(x)}=-\eta.
$$
Then we have
\begin{equation}\label{time-pde}
\left(\frac d{dt}\right)^{\beta,\lambda} G(t)=-\eta G(t), \ t>0
\end{equation}
and
\begin{equation}\label{space-pde}
LF(x)=-\eta F(x), \ x\in D, \ F|_{\partial D}=0.
\end{equation}
The eigenvalue problem (\ref{space-pde}) is solved by an infinite
sequence of pairs $\{(\mu_n, \psi_n)\}$, where $0<
\eta_1<\eta_2\leq \eta_3 \leq \cdots$, $\eta_n\to\infty$, as $n\to\infty$, and $\psi_n$ forms a complete orthonormal set in $L^2(D)$.
In particular, the initial function $f$ regarded as an element of $L^2(D)$ can be represented as
\begin{equation}
f(x)=\sum_{n=1}^\infty \bar f(n)\psi_n(x).
\end{equation}
Use Lemma \ref{eigenvalue-problem} to see that $G_n(t)=\bar f(n)\check{g_\lambda}(t, \eta_n)$
solves \eqref{time-pde}.  Sum these solutions $\psi_n(x)G_n(t)$ to (\ref{frac-derivative-bounded-dd}), to get
\begin{equation}\label{formal-sol-L-1}
u(t,x)=\sum_{n=1}^{\infty}\bar{f}(n)\check{g_\lambda}(t,
\eta_n)\psi_n(x) .
\end{equation}
It remains to show that \eqref{formal-sol-L-1} solves \eqref{frac-derivative-bounded-dd} and satisfies the conditions of Theorem \ref{FC-PDE-Lx}.

The remainder of the proof is similar to \cite[Theorem 3.1]{m-n-v-aop}, so we only sketch the argument.  First note that \eqref{formal-sol-L-1} {converges} uniformly in $t\in [0,\infty)$ in the $L^2$ sense.  Next argue $||u(t, \cdot)-f||_{2,D}\to 0$ as $t\to 0$ using the fact that, since $\check{g_\lambda}(t, \lambda)$  is the Laplace transform of
{ $E_\lambda(t)$,} it is completely monotone and
non-increasing in $\lambda\geq 0$.  Use the Parseval identity, the fact that $\mu_n$ is increasing in $n$, and the
fact that $\check{g_\lambda}(t,\mu_n)$ is non-increasing {in} $n\geq 1$, to get
$
||u(t,\cdot)||_{2,D} \leq \check{g_\lambda}(t,\mu_1)||f||_{2,D}.
$
A Fubini argument, which can be rigorously justified using the bound $\left|\partial_t u(t,x)\right|\leq k(t)g(x)$ from Theorem \ref{classical--tempered-caputo}, shows that the $\psi_n$ transform commutes with
the Caputo tempered fractional derivative. For this, it suffices to show that the
$\psi_n$-transform commutes with the Caputo fractional derivative
of $e^{\lambda t}u(t,x)$.  To check this, write
\begin{equation*}
\begin{split}
&\int_D \psi_n(x) \left(\frac {\partial}{\partial t}\right)^{\beta}\left(e^{\lambda t}u(t,x)\right) dx\\
&=\int_D \psi_n(x) \frac{1}{\Gamma(1-\beta)}\int_0^t \frac{\partial \left( e^{\lambda s}u(s,x)\right)}{\partial s}\frac{ds}{(t-s)^\beta} dx\\
&= \frac{1}{\Gamma(1-\beta)}\int_0^t \left(\int_D \psi_n(x) \frac{\partial }{\partial s}\left(e^{\lambda s}u(s,x)\right)dx\right) \frac{ds}{(t-s)^\beta} \\
&= \frac{1}{\Gamma(1-\beta)}\int_0^t \frac{\partial }{\partial s}\left(e^{\lambda s}\int_D \psi_n(x) u(s,x)dx\right) \frac{ds}{(t-s)^\beta}  \\
&=\frac{1}{\Gamma(1-\beta)}\int_0^t \frac{\partial }{\partial s}(e^{\lambda s}\bar u(s,n)) \frac{ds}{(t-s)^\beta} =\left(\frac {\partial}{\partial
t}\right)^{\beta}\left(e^{\lambda t} \bar u(t,n)\right).
 \end{split}\end{equation*}
Then the Caputo tempered fractional time derivative and the generator $L_D$ can be
applied term by term in \eqref{formal-sol-L-1}.  Next show that the series \eqref{formal-sol-L-1} is the classical
solution to \eqref{frac-derivative-bounded-dd} by checking uniform and absolute convergence.   Argue that $u\in
C^1(\bar D)$ using \cite[Theorem 8.33]{gilbarg-trudinger}, and the absolute and uniform convergence of the series defining
 $f$.   Finally, obtain the stochastic solution by inverting  the $\psi_n$-Laplace transform.
Since $\{\psi_n\}$ forms a complete orthonormal basis for
$L^2(D),$ the $\psi_n$-transform of the killed semigroup
$T_{{D}}(t)f(x)=\sum_{m=1}^\infty e^{-\mu_m t}\psi_m
(x)\bar f (m)$ from \eqref{TDdef} is given by
\begin{equation}\label{MMMc}
\begin{split}
\overline{[T_{{D}}(t)f]}(n)
&=e^{-t\mu_n}\bar{f}(n).
 \end{split}
 \end{equation}
Use Fubini together with \eqref{formal-sol-L-1}
and  \eqref{MMMc} to get
\begin{eqnarray}
u(t,x)&=&\sum_{n=1}^{\infty}\bar{f}(n)\psi_n(x)\check{g_\lambda}(t,\mu_n)=
\sum_{n=1}^{\infty}\psi_n(x) \int_0^{\infty}\bar{f}(n)e^{-\mu_n y}g_\lambda(t,y)dy\nonumber\\
&=&\sum_{n=1}^{\infty}\psi_n(x)\int_0^{\infty}\overline{[T_{{D}}(y)f]}(n)g_\lambda(t,y)dy\nonumber\\
&=&\int_0^{\infty}\left[\sum_{n=1}^{\infty}\psi_n(x)\bar{f}(n)e^{-y\mu_n}\right]g_\lambda(t,y)dy\nonumber\\
&=&\int_{0}^{\infty}T_{{D}}(y)f(x)g_\lambda(t,y)dy\nonumber\\
&=&{\E_x}[f(X(E_{\lambda}(t)))I(\tau_{D}(X)>E_{\lambda}(t))].\nonumber
\end{eqnarray}
The argument that
\[
{\E_{x}}[{f(X(E_{\lambda}{(t)}))I(
\tau_{D}(X)>
E_{\lambda}(t))}]={\E_{x}}[{f(X(E_{\lambda}{(t)}))I(
\tau_{D}(X(E_{\lambda}))> t)}]
\]
is similar to \cite[Corollary 3.2]{m-n-v-aop}.  Uniqueness follows by considering two solutions $u_1,u_2$ with the same initial data, and showing that $u_1-u_2\equiv 0$.
\end{proof}

\begin{remark}
In the special case where $L_D=\Delta$, the Laplacian operator, sufficient conditions for existence of strong solutions to \eqref{frac-derivative-bounded-dd} can be obtained from \cite[Corollary 3.4]{m-n-v-aop}.  Let $f\in C^{2k}_c(D)$ be a $2k$-times continuously differentiable function of compact support in D.  If $k>1+3d/4$, then
\eqref{frac-derivative-bounded-dd} has a classical (strong) solution. In particular, if $f\in C^{\infty}_c(D)$, then the
solution of \eqref{frac-derivative-bounded-dd} is in $C^\infty(D).$
\end{remark}

\begin{remark}
In the special case where $L_D=\Delta$ on an interval $(0,M)\subset \rr$, eigenfunctions and eigenvalues
are explicitly known, and solutions to the tempered fractional Cauchy problem can be made explicit.  Eigenvalues of the Laplacian on $(0,M)$ are
$(n\pi /M)^2$ for $n=1,2,\cdots  $ and the corresponding eigenfunctions are {$\frac2M\sin(n\pi x/M)$}.
Using this eigenfunction expansion,
the solution reads
$$
u(t,x)=
\sum_{n=1}^{\infty}\bar{f}(n)\psi_n(x)\check{g_\lambda}(t,\mu_n)=\sum_{n=1}^{\infty}\bar{f}(n)\sin(n\pi
x/M)\check{g_\lambda}(t,(n\pi /M)^2).
$$
\end{remark}


\begin{thebibliography}{99}





\bibitem{temperedLM} B. Baeumer  and  M.~M. Meerschaert, Tempered stable L\'evy motion and transient super-diffusion.
 {\it  J. Comput. Appl. Math.} {\bf 233}  (2010), 2438--2448.

\bibitem{BNS}  O. E. Barndorff-Nielsen and N. Shephard,  Non-Gaussian Ornstein–Uhlenbeck-based models and some of
their uses in financial economics, {\it J. Roy. Statist. Soc. Ser.
B} {\bf 63} (2001), 1–-42.

\bibitem{bass2} R. F. Bass, {\it Probabilistic Techniques in Analysis.} Springer-Verlag, New York, 1995.

\bibitem{bass} R. F. Bass, {\it Diffusions and Elliptic Operators.} Springer-Verlag, New York, 1998.

 \bibitem{Caputo} M. Caputo, Linear models of dissipation whose Q is almost frequency independent, Part II. {\it Geophys. J. R. Astr. Soc.} {\bf 13} (1967), 529--539.

\bibitem{CGMY}  P. Carr, H. Geman, D. B. Madan and M. Yor, Stochastic volatility for L\'evy processes.
{\it Math. Finance} {\bf 13} (2003),  345–-382.

\bibitem{Cartea2007} \'{A}. Cartea  and D. del Castillo-Negrete, {Fluid limit of
the  continuous-time random walk with general {L\'evy} jump distribution
  functions}. {\it Phys. Rev. E}  {\bf 76} (2007), 041105.

\bibitem{davies} E. B. Davies,  {\it Heat Kernels and Spectral Theory}. Cambridge Tracts in
Mathematics {\bf 92}, Cambridge Univ. Press, Cambridge, 1989.



\bibitem{ditkin} V. A. Ditkin and A. P. Prudnikov, {\it Integral Transforms and Operational Calculus}.
 Pergamon Press, Oxford, 1965.

\bibitem{gilbarg-trudinger}  D. Gilbarg and  N. S. Trudinger, {\it Elliptic Partial Differential
 Equations of Second Order}. Reprint of the 1998 ed., Springer,  New York, 2001.



\bibitem{Jacob}
N. Jacob, {\it Pseudo-differential operators and {M}arkov
processes}.
  {Mathematical Research} {\bf 94},
\newblock Akademie Verlag, Berlin, 1996.



\bibitem{koch3} A. N. Kochubei, Distributed order calculus and equations of ultraslow diffusion.
{\it J. Math. Anal. Appl.} {\bf 340} (2008),  252--281.

\bibitem{MantegnaStanley1994} R. N. Mantegna and H. E. Stanley, Stochastic process with ultraslow convergence to a
Gaussian: The truncated L\'evy flight. {Phys. Rev. Lett.} {\bf 73} (1994), 2946--2949.

\bibitem{MantegnaStanley1995} R. N. Mantegna and H. E. Stanley, Scaling behavior in the dyamics of an economic
 index. {\it Nature} {\bf 376} (1995),  46--49.





\bibitem{m-n-v-aop} M. M. Meerschaert, E. Nane and P. Vellaisamy, Fractional Cauchy problems on bounded
domains. {\it  Ann. Probab.}  {\bf  37} (2009), 979--1007.


\bibitem{M-S-triangular} M. M. Meerschaert and  H.-P. Scheffler, Triangular array limits for continuous
time random walks. {\it Stoch. Proc. Appl.}  {\bf 118} (2008),  1606--1633.

\bibitem{TemperedStable}  M. M. Meerschaert, Y. Zhang and B. Baeumer, Tempered anomalous diffusion in heterogeneous
 systems, {\it Geophys. Res. Lett.} {\bf 35} (2008), L17403.



\bibitem{MillerRoss} K. Miller and B. Ross, {\it An Introduction to the Fractional Calculus and Fractional
Differential Equations}. Wiley and Sons, New York, 1993.


\bibitem{Rosinski2007}
J. Rosi\'nski, {Tempering stable processes}. {\it
{Stochastic Process  Appl.}} {\bf 117} (2009),  677--707.

\bibitem{Royden} H. L. Royden, {\it Real Analysis}.  2nd Ed., MacMillan, New York, 1968.

\bibitem{Samko} S. Samko, A. Kilbas and O. Marichev, {\it Fractional Integrals and Derivatives: Theory and
 Applications}. Gordon and Breach, London, 1993.






\bibitem{Zhang-JSP} Y. Zhang, D. A. Benson, M. M. Meerschaert and H.-P. Scheffler, On using random walks to
solve the space-fractional advection-dispersion equations, {\it J.
Stat. Phys.} {\bf 123} (2006), 89--110.

\bibitem{Zhang-PRE2008} Y. Zhang, M. M. Meerschaert and B. Baeumer, Particle tracking for time-fractional
 diffusion,
 {\it Phys. Rev. E} {\bf 78} (2008), 036705.

\end{thebibliography}
\end{document}